\newtheorem{theorem}{Theorem}  
\newtheorem{proposition}[theorem]{Proposition}
\newtheorem{definition}[theorem]{Definition}
\newtheorem{example}[theorem]{Example}
\newtheorem{corollary}[theorem]{Corollary}
\newtheorem{remar}[theorem]{Remark}
\newtheorem{claim}{Claim}
\renewenvironment{proof}{Proof:\ \ \ }{\QED}
\newenvironment{remark}{\begin{remar}\rm}{\end{remar}}
\newcommand{\QED}{\qed}
\newcommand{\bfind}[1]{\index{#1}{\sl #1}}
\newcommand{\sn}{\par\smallskip\noindent}
\newcommand{\pars}{\par\smallskip}
\newcommand{\parm}{\par\medskip}
\newcommand{\parb}{\par\bigskip}
\newcommand{\N}{\mathbb N}
\newcommand{\Z}{\mathbb Z}
\newcommand{\F}{\mathbb F}
\newcommand{\cP}{\mathcal P}
\newcommand{\cB}{\mathcal B}
\newcommand{\cC}{\mathcal C}
\newcommand{\cal}{\mathcal}
\newcommand{\ui}{i^*}    
\newcommand{\co}{\mbox{\rm co}}
\newcommand{\cl}{\mbox{\rm cl}}
\newcommand{\diam}{\mbox{\rm diam}}
\newcommand{\ovl}[1]{\overline{#1}}
\begin{document}
\title[]{Valuation theory, generalized IFS attractors and fractals}
\author{Jan Dobrowolski
and Franz-Viktor Kuhlmann}
\thanks{The first author was supported by European Union's Horizon 2020 research
and innovation programme under the Marie Sklodowska-Curie grant agreement No 705410. \\
The authors wish to thank the referee for his corrections and suggestions that helped to improve this paper.}
\address{Faculty of Mathematics and Physical Sciences,
University of Leeds,
Leeds LS2 9JT, UK}
\email{J.Dobrowolski@leeds.ac.uk}
\address{Institute of Mathematics, 	
ul. Wielkopolska 15, 	  	  	
70-451 Szczecin, Poland}
\email{fvk@math.usask.ca}
\date{April 10, 2018}
%
\subjclass[2000]{Primary 28A80, 12J25; Secondary 37C25, 37C70, 12J15}

\begin{abstract}
Using valuation rings and valued fields as examples, we discuss in
which ways the notions of ``topological IFS attractor'' and ``fractal
space'' can be generalized to cover more general settings.
\end{abstract}
\maketitle

%
%
%
%
%

Given functions $f_1,\ldots,f_n$ on a set $X$, we will
associate to them an \bfind{iterated function system} (IFS), denoted by
\[
F\>=\>[f_1,\ldots,f_n]\>,
\]
where we view $F$ as a function on the power set $\cP(X)$ defined by
\[
{\cal P}(X)\ni S\>\mapsto\>F(S)\,:=\,\bigcup_{i=1}^n f_i(S)\>.
\]
One of the basic approaches to calling a space $X$ ``fractal'' is to ask
that there is an iterated function system $F$ such that $F(X)=X$, and that the
functions in the system satisfy certain additional forms of being ``contracting'':
\begin{definition} \rm
A compact metric space $(X,d)$ is called \bfind{fractal} if there is an iterated function system
$F=[f_1,\ldots,f_n]$ with $F(X)=X$ where the functions $f_i$ are \bfind{weakly contracting}, that is,
$d(f_ix,f_iy)<d(x,y)$ for any distinct $x,y\in X$.
\end{definition}
\sn
Alternatively, one may ask that the functions $f_i$ are \bfind{contracting}, that is, there is some positive real
number $C<1$ such that $d(f_ix,f_iy)\leq C d(x,y)$ for all $x,y\in X$.

Iterated function systems consisting of weakly contracting functions are studied in e.g.\ \cite{dA,dAS,N}.

\pars
In the absence of a metric, one has to find other ways of encoding what is meant by ``contracting''.
In \cite{BaNo}, Banakh and Nowak give a topological analogue for the
common definition of ``fractal'' that uses iterated function systems; for a detailed continuation of this approach, see \cite{BKNNS,BNS}.

\begin{definition} \rm                            \label{top_ifs}
A compact topological space $X$ is called \bfind{fractal} if there is an
iterated function system $F=[f_1,\ldots, f_n]$ consisting of continuous
functions $f_i:X\rightarrow X$ such that $F(X)=X$ and the following
``shrinking condition'' is satisfied:
\sn
(SC) for every open covering ${\cal C}$ of $X$, there is some
$k\in\N$ such that for every sequence $(i_1,\ldots,i_k)\in
\{1,\ldots,n\}^k$ there is $U\in {\cal C}$ with
\[
f_{i_1}\circ\ldots\circ f_{i_k}(X)\>\subset U\>.
\]
\end{definition}

Clearly, it suffices to check (SC) only for finite coverings. If we fix
a basis for the open sets, then it suffices to check (SC) only for
finite coverings consisting of basic sets, as every finite covering can
be refined to such a covering.

\parm
We will now give examples of iterated function systems in the special case where the topology is induced by a
valuation on a field. For general background on valuation theory, see e.g.\ \cite{EP,R1,R2,W}.
For the topology induced by a valuation $v$ on a field $K$ with value group $vK$, one can take
the collection of ultrametric balls
\[
B_\alpha(a)\>:=\> \{b\in K \mid v(a-b)\geq\alpha\} \;\mbox{ where }
\alpha\in vK \mbox{ and } a\in K
\]
as a basis; note that by the ultrametric triangle law, this set is closed
under nonempty intersections over finite subsets. For the same reason, if $v(a-b)\geq\alpha$,
then $B_\alpha(a)=B_\alpha(b)$. The same works when we restrict $v$ to a subring $R$ of $K$, except that then the
values of the elements in $R$ just form a linearly ordered subset of $vK$.

\begin{example}  \rm
We take a prime $p$ and denote by $\F_p$ the finite field with $p$
elements. Then $\F_p$ consists of the elements $\ovl{i}= i+p\Z$, $0\leq i<p$.
We consider the Laurent series ring
\[
R\>:=\>\F_p[[t]]\>=\>\left\{\sum_{j=0}^{\infty} \ovl{ i}_j\, t^j\mid
\ovl{ i}_j\in\F_p\right\}\>.
\]
The $t$-adic valuation $v_t$ on $\F_p[[t]]$ is defined by
\begin{equation}                            \label{t-adic}
v_t\,\sum_{j=0}^{\infty}\ovl{i_j}\, t^j\>=\> \min\{j\mid j\geq 0\mbox{ such that } \ovl{i_j}\ne \ovl{0}\}\>.
\end{equation}

For $0\leq i\leq p-1$, we define a function $f_i$ by
\[
f_i\left(\sum_{j=0}^{\infty}\ovl{ i}_j\, t^j\right)\>:=\>\ovl{i} + \sum_{j=1}^{\infty}
\ovl{i}_{j-1}\, t^j\>=\>\ovl{i}+t\sum_{j=0}^{\infty} \ovl{i_j}\, t^j\>,
\]
where $\ovl{i}$ is understood to be an element of $\F_p\,$. Then
\begin{equation}                         \label{fi}
f_i(R) \>=\> \ovl{i}+tR
\end{equation}
and therefore, the iterated function system
$F=[f_0,\ldots,f_{p-1}]$ satisfies $F(R)=\bigcup_{0\leq i<p} \ovl{i}+tR=R$.
Each ultrametric ball in $R$ (with respect to the $t$-adic valuation of $\F_p[[t]]$) is of the form
\[
B_m\left(\sum_{j=0}^{m-1} \ovl{i_j}\, t^j\right)\>=\> \left\{b\in R\mid t^m\mbox{ divides } b-\sum_{j=0}^{m-1}
\ovl{i_j}\, t^j\right\}
\]
for some integer $m\geq 0$, which we call the radius of the ball.
(The empty sum is understood to be $0$). Given any finite open covering
of $R$ consisting of ultrametric balls, we take $m$ to be the maximum of the radii of all balls in the
covering. Then the covering can be refined to a covering of the form
\[
\left\{\left. B_m\left(\sum_{j=0}^{m-1} \ovl{i_j}\, t^j\right)\,\right|\,
i_0,\ldots,i_{m-1}\in \{0,\ldots,p-1\}\right\}\>.
\]
Choose any $m\geq 1$ and $i_0,\ldots,i_{m-1}\in \{0,\ldots,p-1\}$. By induction on $m$ we derive from (\ref{fi}):
\[
f_{i_0}\circ\ldots\circ f_{i_{m-1}}(R) \>=\> \ovl{i_0}\,+\,\ovl{i_1}\,t\,+\ldots+\,\ovl{i_{m-1}}\, t^{m-1}\,+\,t^m R
 \>=\> B_m\left(\sum_{j=0}^{m-1}\ovl{i_j}\,t^j\right)\,.
\]
Since the functions $f_i$ are continuous in the topology induced by the ultrametric (an argument will be given below
in the more general case of discrete valuation rings), we see that $\F_p[[t]]$ with its ultrametric balls is fractal,
in the sense of Definition~\ref{top_ifs}.
\hfill $\diamond$
\end{example}

Here is an obvious generalization of the previous example.
\begin{example}  \rm
We work in the same situation as in the last example, but now fix an integer $\mu\geq 0$ and for every $\ui
=(\ui(0),\ldots,\ui(\mu))\in \{0,\ldots,p-1\}^{\mu+1}$ we set
\[
f_{\ui}\left(\sum_{j=0}^{\infty}\ovl{i_j}\, t^j\right) \>:=\> \sum_{j=0}^\mu \ovl{\ui(j)}\, t^j \,+\, t^{\mu+1}
\sum_{j=0}^{\infty} \ovl{i_j}\,t^j\>.
\]
Then
\begin{equation}                         \label{fi*}
f_{\ui}(R)\>=\>\sum_{j=0}^\mu \ovl{\ui(j)}\, t^j + t^{\mu+1}R
\end{equation}
and therefore, the iterated function system $F=[f_{\ui}\mid {\ui}\in \{0,\ldots,p-1\}^{\mu+1}]$ satisfies
\[
F(R)\>=\>\sum_{j=0}^{\mu} \F_p t^j + t^{\mu+1}R\>=\> R\>.
\]
Choose any $m\geq 1$ and $\ui_0,\ldots,\ui_{m-1}\in \{0,\ldots,p-1\}^{\mu+1}$.
By induction on $m$ we derive from (\ref{fi*}):
\begin{eqnarray*}
f_{\ui_0}\circ\ldots\circ f_{\ui_{m-1}}(R) &=& \sum_{j=0}^\mu \ovl{\ui_0(j)}\, t^j\,+\,
t^{\mu+1}\sum_{j=0}^\mu \ovl{\ui_1(j)}\, t^j\,+\ldots+\\
&+& (t^{\mu+1})^{m-1}\sum_{j=0}^\mu \ovl{\ui_{m-1}(j)}\, t^j\,+\,(t^{\mu+1})^m R \\
&=& B_{m(\mu+1)}\left(\sum_{k=0}^{m-1} t^{k(\mu+1)}\sum_{j=0}^\mu \ovl{\ui_k (j)} \, t^j \right)\>.
\end{eqnarray*}
\hfill $\diamond$
\end{example}

We generalize our observations to discrete valuation rings (which in
general cannot be presented in power series form, in particular not in
mixed characteristic).

We take a discrete valuation ring $R$ with maximal ideal $M$ and choose a
uniformizing parameter $t\in R$, i.e., the value of $t$ is the smallest positive
element in the value set of $R$. Further, we choose a system of
representatives $S\subset R$ for the residue field $R/M$. Then for every
$s\in S$ we define a function $f_s$ by:
\begin{equation}
f_s(a)\>:=\> s\,+\,ta
\end{equation}
for $a\in R$. Then
\begin{equation}                         \label{fs}
f_s(R) \>=\> s+tR
\end{equation}
and therefore,
\[
\bigcup_{s\in S} f_s(R) \>=\> \bigcup_{s\in S} s+tR\>=\> R\>.
\]
Choose any $m\geq 1$ and $s_0,\ldots,s_{m-1}\in S$. By induction on $m$ we derive from (\ref{fs}):
\[
f_{s_0}\circ\ldots\circ f_{s_{m-1}}(R) \>=\> s_0\,+\,s_1\,t\,+\ldots+\,s_{m-1}\, t^{m-1}\,+\,t^m R \>=\>
B_m\left(\sum_{j=0}^{m-1} s_j\, t^j\right)\>.
\]

If $a,b\in R$ with $a-b\in B_m(0)=t^m R$, then
\[
f_s(a)-f_s(b) \>=\> s+ta - (s+tb) \>=\> t(a-b)\in t^{m+1} R = B_{m+1}(0)\>.
\]
This shows that each $f_s$ is
contracting and hence continuous in the topology induced by the ultrametric.

If $R/M$ is finite, then we have finitely many functions and obtain:
\begin{proposition}
Every discrete valuation ring with finite residue field and equipped
with the canonical ultrametric is fractal (under {\it both} definitions given above).
\end{proposition}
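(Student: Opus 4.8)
The plan is to assemble the facts already established above into verifications of the two definitions, the only genuinely new work being the compactness of $R$ and the shrinking condition (SC). Recall that $F(R)=R$, that every $f_s$ is contracting (hence both weakly contracting and continuous), and that for every $m\geq 1$ and $s_0,\ldots,s_{m-1}\in S$ the composite $f_{s_0}\circ\ldots\circ f_{s_{m-1}}(R)$ equals the ball $B_m(\sum_{j=0}^{m-1}s_j t^j)$ of radius $m$. Since $R/M$ is finite, of cardinality $q$ say, the set $S$ is finite, so $F=[f_s\mid s\in S]$ is a genuine iterated function system. Thus for Definition~\ref{top_ifs} it remains to establish compactness and (SC), while for the metric definition it remains to establish compactness and to record the contraction constant.

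First I would check that $(R,d)$ is compact, where $d(a,b)=c^{v(a-b)}$ for a fixed real $c$ with $0<c<1$ is the canonical ultrametric. Total boundedness comes from the finiteness of the residue field: for each $m$ the ring $R$ is the disjoint union of the $q^m$ balls of radius $m$ (the cosets of $M^m=t^m R$), and these balls have diameter $c^m\to 0$, so $R$ is covered by finitely many balls of arbitrarily small diameter. Together with completeness this gives compactness, since a complete, totally bounded metric space is compact. I expect this to be the only delicate point: completeness is indispensable, since a non-complete discrete valuation ring such as $\Z_{(p)}$ is totally bounded but not compact, so one must read the hypothesis as including completeness of $R$ (equivalently, work inside the completion), which is also the setting signalled by the earlier remark about mixed characteristic.

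For the metric definition, the identity $v(f_s(a)-f_s(b))=v(t)+v(a-b)$ shows that $d(f_s(a),f_s(b))=c^{v(t)}\,d(a,b)$ with $c^{v(t)}<1$, so each $f_s$ is contracting with the uniform constant $C=c^{v(t)}$; in particular the $f_s$ are weakly contracting. Combined with compactness and $F(R)=R$, this shows that $R$ is fractal under both the weakly contracting definition and its stronger contracting variant.

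Finally, for (SC) I would follow the model of the preceding example. By the remark after Definition~\ref{top_ifs} it suffices to treat a finite covering ${\cal C}$ of $R$ by ultrametric balls; let $m$ be the maximum of their radii. As in the example, the partition of $R$ into its $q^m$ balls of radius $m$ refines ${\cal C}$: given a point $x$ of a radius-$m$ ball $B$ and a covering ball $B_r(y)\ni x$ with $r\leq m$, one has $B=B_m(x)\subseteq B_r(x)=B_r(y)=U\in{\cal C}$. Now take any $k\geq m$ (say $k=\max\{m,1\}$); then for every $(s_0,\ldots,s_{k-1})\in S^k$ the image $f_{s_0}\circ\ldots\circ f_{s_{k-1}}(R)=B_k(\sum_{j=0}^{k-1}s_j t^j)$ is a ball of radius $k$, hence contained in a ball of radius $m$ and therefore in some $U\in{\cal C}$. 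This verifies (SC) and completes the proof.
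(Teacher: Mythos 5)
Your proof is correct and follows essentially the same route as the paper, which treats the computations preceding the proposition ($F(R)=R$, the identification of the composite images $f_{s_0}\circ\ldots\circ f_{s_{m-1}}(R)$ with the balls $B_m$, and the contraction estimate) as the proof and does not separately spell out (SC) or compactness. Your observation that compactness forces one to read completeness into the hypothesis (e.g.\ $\Z_{(p)}$ is a non-complete, non-compact discrete valuation ring with finite residue field) is a genuine point that the paper passes over in silence.
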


\parm
Note that for any topological space $X$, the existence of a continuous
IFS $F=[f_1,\dots,f_n]$ satisfying conditions $(SC)$ and $X=\bigcup_i
f_i[X]$ from Definition~\ref{top_ifs} implies that $X$ is quasi-compact.
By the following example, it can be seen that these conditions do not
imply that $X$ is Hausdorff (so Definition~\ref{top_ifs} could be also
considered for non-Hausdorff quasicompact spaces):

\begin{example}
Let $X=[0,1]$ be equipped with the topology in which the open sets are $\emptyset$ and the
cofinite sets. Define $f_1,f_2:X\to X$ by $f_1(x)=x/2$,
$f_2(x)=1/2+x/2$. Then the system $(f_1,f_2)$ consists of continuous
functions, and satisfies conditions $(SC)$ and $X=\bigcup_i f_i[X]$.
\hfill $\diamond$
\end{example}

The following definition seems to be the weakest reasonable generalization
of Definition \ref{top_ifs} to possibly infinite function systems.

\begin{definition}          \rm
Let $X$ be a topological space, and $\{f_i:i\in I\}$ any set of
continuous mappings $X\to X$ satisfying $(SC)$, i.e., for any finite
open covering $\cal{U}$ of $X$ there is a natural number $l$ such that
for any $g_1,\dots,g_l\in \{f_i:i\in I\}$, the image $g_1\circ\dots\circ
g_l[X]$ is contained in some $U\in \cal{U}$. We will say that $X$ is a
topological attractor for $\{f_i:i\in I\}$, if $X=\cl(\bigcup_{i\in
I}f_i[X])$. For any cardinal number $\kappa$, we will say that $X$ is a
topological $\kappa$-IFS-attractor, if $X$ is an attractor for some set
of continuous functions satisfying $(SC)$ of cardinality at most $\kappa$.

This definition generalizes Definition \ref{top_ifs} in the sense that being a topological IFS-attractor
is the same as being a compact $n$-IFS-attractor for some $n\in \omega$ (because when $X$
is compact and $I$ is finite, then we have $\cl(\bigcup_{i\in
I}f_i[X])=\bigcup_{i\in
I}f_i[X])$.

The reader may note that our notion of a topological attractor
is not the same as the one used in papers \cite{dAS,N}, where
a nonempty compact set is defined to be a topological attractor if it is
homeomorphic to an attractor.
\end{definition}

For normal spaces, the property of being a $\kappa$-IFS-attractor implies
a bound on the weight (i.e., the minimal cardinality of a basis of the topology):
\begin{proposition}
Suppose $X$ is a normal space which is a $\kappa$-IFS-attractor. Then
$w(X)\leq 2^\kappa +\aleph_0$.
\end{proposition}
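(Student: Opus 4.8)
The plan is to exhibit an explicit basis of $X$ whose members are indexed level by level, so that the cardinality is counted one word-length at a time; this is what yields $2^\kappa+\aleph_0$ rather than the cruder $2^{\kappa+\aleph_0}$. Fix continuous maps $\{f_i:i\in I\}$ witnessing that $X$ is a $\kappa$-IFS-attractor, so $|I|\le\kappa$, the system satisfies (SC), and $X=\cl(\bigcup_{i\in I}f_i[X])$. For a word $w=(i_1,\dots,i_l)\in I^l$ write $f_w:=f_{i_1}\circ\cdots\circ f_{i_l}$, and for $l\in\N$ and $W\subseteq I^l$ set
\[
U_{l,W}\>:=\> X\setminus\cl\Big(\bigcup_{w\in I^l\setminus W}f_w[X]\Big),
\]
which is open. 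I would take as candidate basis $\cB:=\{U_{l,W}\mid l\in\N,\ W\subseteq I^l\}$. Since $|I^l|\le\kappa^l$, level $l$ contributes at most $2^{\kappa^l}$ sets, whence $|\cB|\le\sum_l 2^{\kappa^l}\le\aleph_0\cdot 2^\kappa=2^\kappa+\aleph_0$; note that when $\kappa$ is finite each level contributes only finitely many sets, so $|\cB|\le\aleph_0$, matching the bound in that case as well. Everything then reduces to showing that $\cB$ is a basis.

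First I would record the auxiliary fact that $\bigcup_{w\in I^l}f_w[X]$ is dense in $X$ for every $l$. This is an easy induction on $l$ starting from $X=\cl(\bigcup_i f_i[X])$: passing from level $l$ to level $l+1$ amounts to applying the continuous maps $f_w$ to the dense set $\bigcup_i f_i[X]$ and using $f_w[\cl(S)]\subseteq\cl(f_w[S])$.

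To verify the basis property, fix an open $O$ and a point $x\in O$. Since $X$ is normal, hence regular, I would choose open sets $V,V_1$ with $x\in V\subseteq\cl(V)\subseteq V_1\subseteq\cl(V_1)\subseteq O$ (two applications of regularity). Because $\cl(V)\subseteq V_1$, the pair $\{V_1,\,X\setminus\cl(V)\}$ is a finite open cover of $X$, so (SC) supplies a level $l$ at which every $f_w[X]$ ($w\in I^l$) lies inside $V_1$ or inside $X\setminus\cl(V)$. Setting
\[
W\>:=\>\{w\in I^l\mid f_w[X]\not\subseteq X\setminus\cl(V)\},
\]
membership $w\in W$ forces $f_w[X]\subseteq V_1$, while $w\in I^l\setminus W$ gives $f_w[X]\cap\cl(V)=\emptyset$. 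I then claim $x\in U_{l,W}\subseteq O$. For $U_{l,W}\subseteq O$ one shows $X\setminus O\subseteq\cl(\bigcup_{w\notin W}f_w[X])$: for $c\notin O$ and a neighbourhood $N$ of $c$, the open set $N\cap(X\setminus\cl(V_1))$ still contains $c$ and, by the density fact, meets some $f_w[X]$ with $w\in I^l$; this $w$ cannot lie in $W$ (as then $f_w[X]\subseteq V_1$), so $N$ meets $\bigcup_{w\notin W}f_w[X]$. For $x\in U_{l,W}$, observe that $\bigcup_{w\notin W}f_w[X]\subseteq X\setminus\cl(V)$, so its closure is contained in $X\setminus\mathrm{int}\,\cl(V)$; since $V$ is open, $x\in V\subseteq\mathrm{int}\,\cl(V)$, so $x$ lies outside that closure. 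This gives $x\in U_{l,W}\subseteq O$ and hence $w(X)\le|\cB|\le2^\kappa+\aleph_0$.

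The step I expect to be the main obstacle is the inclusion $x\in U_{l,W}$, i.e.\ keeping $x$ out of the closure of the union of the ``far'' cylinders. Each far cylinder individually misses the fixed neighbourhood $\cl(V)$ of $x$, but for infinite $\kappa$ this has to hold \emph{uniformly} over the up-to-$\kappa$-many far cylinders, and it is exactly (SC), applied to the two-element cover, that produces a single level $l$ at which every cylinder is trapped on one side or the other. Once this uniformity is in hand, the density lemma handles the reverse inclusion and the level-by-level count keeps the cardinality at $2^\kappa+\aleph_0$; these remaining points are routine.
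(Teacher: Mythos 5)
Your proposal is correct and follows essentially the same route as the paper's own proof: the same family of basic open sets $X\setminus\cl\bigl(\bigcup_{w\in J}f_w[X]\bigr)$ indexed by levels $l$ and subsets of $I^l$, the same density claim for the level-$l$ images proved by induction via continuity, and the same use of normality plus (SC) applied to a two-set cover $\{V_1, X\setminus\cl(V)\}$ to sort the length-$l$ words into ``near'' and ``far'' classes. The only differences are cosmetic (you index by the complement of the paper's set $J$ and phrase the ``$x\in U_{l,W}$'' step via $\cl(X\setminus\cl V)=X\setminus\mathrm{int}\,\cl V$ instead of noting that the open neighbourhood $V$ of $x$ misses the union).
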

\begin{proof}
Choose a system of functions $F=\{f_i:i\in I\}$ of cardinality at most
$\kappa$ satisfying $(SC)$ such that $X$ is an attractor for $F$, i.e.,
$X=\cl(\bigcup_{i\in I}f_i[X])$.
\begin{claim}
For any natural number $l$, we have that
\begin{equation}                            \label{X=}
X\>=\>\cl\left(\bigcup_{g_1,\dots,g_l\in F} g_1\circ\dots\circ
g_l[X]\right)\>.
\end{equation}
\end{claim}
{\em Proof of the claim.}
We proceed by induction on $l$. Suppose that (\ref{X=}) holds.
Then for every $i\in I$, we get by the continuity of $f_i$ that
\begin{eqnarray*}
f_i[X] & \subset & \cl\left(f_i\left[\bigcup_{g_1,\dots,g_l\in F}
g_1\circ\dots\circ g_l[X]\right]\right) \\
& = & \cl\left(\bigcup_{g_1,\dots,g_l\in F}f_i\circ g_1\circ\dots\circ
g_l[X]\right) \\
& \subset & \cl\left(\bigcup_{g_1,\dots,g_{l+1}\in F}g_1\circ\dots\circ
g_{l+1}[X]\right)\>.
\end{eqnarray*}
Thus, we obtain that $X= \cl(\bigcup_{i\in I}f_i[X])\subset
\cl (\bigcup_{g_1,\dots,g_{l+1}\in F}g_1\circ\dots\circ g_{l+1}[X])$.
This completes the proof of the claim.

\parm
Define
\[
\mathcal{B} =\left\{\left. X\setminus\cl\left(\bigcup_{(g_1,\dots,g_l)
\in I} g_1\circ\dots\circ g_l[X]\right) \,\right |\, l<\omega,
I\subset F^l\right\}\>.
\]
Clearly, $|\mathcal{B}|\leq \sum_{l<\omega} |P(F^l)|= 2^\kappa\aleph_0=2^\kappa+
\aleph_0$. We will show that
$\mathcal{B}$ is a basis of $X$. So take any open subset $U$ of $X$ and
$x\in U$. Since $X$ is normal, we can choose open sets $V_1,V_2$ such
that
\[
x\in V_1\subset \cl(V_1)\subset V_2\subset \cl(V_2)\subset U\>.
\]
Let $l$ be as in the condition $(SC)$ for $F$ and the covering
$\{V_2,X\setminus \cl(V_1)\}$ of $X$. Define $J=\{(g_1,\dots,g_l)\in
F^l:g_1\circ\dots\circ g_l[X]\subset X\setminus \cl(V_1)\}$ and
\sn
$W=X\setminus \cl(\bigcup_{(g_1,\dots,g_l) \in J}g_1\circ\dots\circ
g_l[X])\in \mathcal{B}$. Since $V_1$ is disjoint from
$\bigcup_{(g_1,\dots,g_l) \in J}g_1\circ\dots\circ g_l[X]$, we get that
$x\in W$. It remains to check that $W\subset U$. Take any $y \in
X\setminus U$. We will show that $y\in \cl(\bigcup_{(g_1,\dots,g_l) \in
J}g_1\circ\dots\circ g_l[X])$. Take any open neighbourhood $Z$ of $y$.
By the claim, $g_1\circ\dots\circ g_l[X]$ meets $Z\cap (X\setminus
\cl(V_2))$ for some $h_1,\dots,h_l\in F$. Then the image
$g_1\circ\dots\circ g_l[X]$ is not contained in $V_2$, so it is
contained in $X\setminus \cl(V_1)$ and $(h_1,\dots,h_l)\in J$. Therefore, $Z$
meets $\bigcup_{(g_1,\dots,g_l) \in J}g_1\circ\dots\circ g_l[X]$, and we are done.
\end{proof}

\pars
The above proposition applies in particular to compact spaces (which are
known to be normal). In particular, we obtain that every topological
IFS-attractor has a countable basis. Thus, by the Urysohn metrization
theorem, we get:

\begin{corollary}\label{metrizable}
Every topological IFS-attractor is metrizable.
\end{corollary}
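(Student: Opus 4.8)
The plan is to deduce this directly from the preceding Proposition together with the Urysohn metrization theorem, so the corollary should require essentially no new computation. First I would unwind the definitions: by the remark accompanying the definition of topological $\kappa$-IFS-attractors, a topological IFS-attractor in the sense of Definition~\ref{top_ifs} is precisely a compact $n$-IFS-attractor for some finite $n\in\omega$. Thus such an $X$ is a compact space equipped with a finite system $F=[f_1,\dots,f_n]$ of continuous self-maps satisfying $(SC)$ and with $X=\bigcup_i f_i[X]$; in particular $X$ is an attractor for a system of cardinality $n$.

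Next I would invoke the standard fact that compact (Hausdorff) spaces are normal, so the hypotheses of the Proposition above are met with $\kappa=n$. Applying that Proposition gives $w(X)\leq 2^n+\aleph_0$. Because $n$ is finite, $2^n$ is finite as well, and hence $w(X)\leq\aleph_0$; that is, $X$ admits a countable basis and is therefore second countable. This is exactly the intermediate conclusion flagged in the paragraph preceding the corollary.

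Finally I would apply the Urysohn metrization theorem. A compact Hausdorff space is normal, hence regular, and we have just established that it is second countable; these are precisely the conditions under which Urysohn's theorem yields a compatible metric. Therefore $X$ is metrizable, as claimed.

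The substantive work — bounding the weight — has already been performed in the Proposition, so the only point that needs care is confirming that every hypothesis of Urysohn's theorem is genuinely available: second countability is furnished by the Proposition (crucially using the finiteness of the function system, which makes $2^n$ finite), while regularity is supplied by compactness via normality. Once these ingredients are assembled, the conclusion is immediate, and I do not anticipate any real obstacle beyond stating this assembly cleanly.
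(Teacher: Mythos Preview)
Your proposal is correct and follows exactly the paper's own argument: compactness gives normality, the preceding Proposition with finite $\kappa$ yields a countable basis, and then the Urysohn metrization theorem finishes the proof. You have simply spelled out in more detail the steps the paper compresses into the paragraph immediately preceding the corollary.
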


Condition $(SC)$ is not satisfied in some natural examples where the
metric shrinking condition is satisfied (i.e., $\lim_l\sup_{i_1,\dots,i_l}
\diam(f_{i_1}\circ\dots\circ f_{i_l}[X])=0$):
\begin{example}\label{baire}
Let $X=\omega^{\omega}$ be the Baire space (which is homeomorphic to
$k((t))$ considered with the valuation topology, where $k$ is any field
of cardinality $\aleph_0$). For any $i<\omega$, define $f_i:X\to X$ as
follows: $f_i(x)(0)=i$ and $f_i(x)(n)=x(n-1)$ for $n>0$. Then $(SC)$ is
not satisfied for $f_i,i<\omega$, which is witnessed by the covering
$\{U,X\backslash U\}$, where $U=\bigcup_{n<\omega}\{x\in
X:x(0)=n,x(1)=\dots =x(n)=0\}$. \hfill $\diamond$
\end{example}

Thus, we want to consider another topological shrinking condition, in
which we are allowed to choose a basis from which the covering sets are
taken. However, to make it possible to cover in this way the whole space
(which is not assumed to be compact), we allow one of the covering sets
to be not in the fixed basis. This leads to the following definition:

\begin{definition}
A family of functions $(f_i)_{i\in I}$ on a topological space $X$
satisfies $(SC*)$ if there is a basis $\mathcal{B}$ of $X$ such that for
every finite open covering ${\cal C}$ of $X$ containing at most one set
which is not in $\mathcal{B}$, there is some $k\in\N$ such that, for
every sequence $(i_1,\ldots,i_k)\in I^k$, there is $U\in {\cal C}$ with
\[
f_{i_1}\circ\ldots\circ f_{i_k}(X)\>\subset U\>.
\]
\end{definition}

Every space is an attractor for the set of all constant functions from
$X$ to $X$ (i.e., is covered by their images). We will say that $X$ is
a weak $*$-IFS attractor if it is an attractor for a set of functions
satisfying $(SC*)$ of a cardinality smaller than $|X|$. We will say that
$X$ is a $*$-IFS attractor if it is an attractor for a finite set of
functions satisfying $(SC*)$.

Clearly, we have:
\begin{remark}
If $X$ is a compact space, then it is a $*$-IFS attractor if and only
if it is a topological IFS attractor.
\end{remark}

By the following example, it can be seen that being a $*$-IFS attractor
does not imply compactness:
\begin{example}
Let $X=\omega$ be considered with the discrete topology. Define
$f_0,f_1:X\to X$ by $f_0(n)=0$ and $f_1(n)=n+1$. Then $X$ is a $\,*$-IFS
attractor for $\{f_0,f_1\}$, so $X$ is a $*$-IFS attractor.
\end{example}
\begin{proof}
We choose a basis $\mathcal{B}$ consisting of all singletons. Consider
any covering of $X$ of a form $\{U,\{n_1\},\dots,\{n_l\}\}$. Then it is
sufficient to take $k=\max(n_1,\dots,n_l)+1$.
\end{proof}

\begin{example}
Let $X=\omega^{\omega}$, and let $f_i:X\to X, i<\omega$ be as in
Example~\ref{baire}. Then, $(f_i)_{i<\omega}$ satisfies $(SC*)$, so $X$
is a weak $*$-IFS attractor. More generally, for any cardinal number
$\kappa$, the space $\kappa ^\omega$ is an attractor for a set of
functions of cardinality $\kappa$, so it is a weak $*$-IFS attractor if
$\kappa<\kappa^{\omega}$ (this holds for example for all cardinals with
countable cofinality, so for unboundedly many cardinals).
\end{example}
\begin{proof}
For any $\alpha \in \kappa$, define $f_{\alpha}:\kappa^{\omega}\to
\kappa^{\omega}$ by $f_{\alpha}(x)(0)=\alpha$ and
$f_{\alpha}(x)(n)=x(n-1)$ for $n>0$. We choose the standard basis of
$\kappa^{\omega}$, i.e.,
\[
\mathcal{B}=\{A_x:x\in \kappa^k, k<\omega\},
\]
where $A_x=\{y\in \kappa^{\omega}: x\subset y\}$. Write $|x|=k$ if $x\in \kappa^k$.
Choose any open covering of $\kappa^{\omega}$ of the form
$\{U,A_{x_1},\dots,A_{x_n}\}$. Put $k=\max(|x_1|,\dots,|x_n|)$. For any
sequence $(\alpha_0,\dots,\alpha_{k-1})\in \kappa^k$ we have that
$f_{\alpha_0}\circ\dots\circ f_{\alpha_{k-1}}[\kappa^{\omega}]=A_y$,
where $y(i)=\alpha_i$ for all $i<k$, so this image is either contained
in one of the sets $A_{x_1},\dots,A_{x_n}$, or disjoint from all of them
and thus contained in $U$.
\end{proof}

\begin{proposition}\label{ordered}
Suppose $A$ is a densely ordered abelian group and $|\cdot |:A\to \{a\in
A\mid a\geq0\}$ is the associated absolute value. Consider a collection of
functions $f_i:A\to A,i\in I$. Suppose that there is a sequence
$(a_i)_{i<\omega}$ of positive elements of $A$ which converges to 0,
and that for every $k$ and any sequence $(i_1,\ldots,i_k)\in I^k$ we have
$\diam(f_{i_1}\circ\dots\circ f_{i_k}[A])<a_i$. Then $f_i:A\to A,i\in I$
satisfies $SC*$ (where we consider $A$ with the order topology).
\end{proposition}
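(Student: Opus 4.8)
The plan is to take as witnessing basis $\mathcal{B}$ the collection of all open balls $B(x,a):=\{y\in A:|x-y|<a\}=(x-a,x+a)$ with $x\in A$ and $a>0$; since $A$ is densely ordered, these intervals form a basis of the order topology (every open interval, and every ray, is a union of such balls). A finite covering of $A$ with at most one set outside $\mathcal{B}$ then has the form $\mathcal{C}=\{U,B(c_1,r_1),\ldots,B(c_n,r_n)\}$, and I set $C:=A\setminus U$, a closed set contained in $O:=\bigcup_{j=1}^n B(c_j,r_j)$. (Note $A$ is nontrivial, hence unbounded, while $O$ is bounded, so a finite covering by balls alone cannot cover $A$; thus the single non-basis set $U$ is genuinely needed and the displayed form is the general one.)

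The whole argument reduces to a \emph{Lebesgue-number} statement: there is $\delta>0$ in $A$ such that for every $x\in C$ some $j$ satisfies $B(x,\delta)\subseteq B(c_j,r_j)$. Granting this, I use the convergence of $(a_k)$ to $0$ to pick a single $k$ with $a_k<\delta$. For any sequence $(i_1,\ldots,i_k)\in I^k$, write $V:=f_{i_1}\circ\cdots\circ f_{i_k}[A]$; by the hypothesis $\diam(V)<a_k$, any two points of $V$ are at distance $<a_k$. If $V\subseteq U$ we are done; otherwise pick $p\in V\cap C$, so $B(p,\delta)\subseteq B(c_j,r_j)$ for some $j$. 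Since every $q\in V$ satisfies $|q-p|<a_k<\delta$, we get $V\subseteq B(p,a_k)\subseteq B(p,\delta)\subseteq B(c_j,r_j)\in\mathcal{C}$. Hence $(SC*)$ holds for this covering, and as the covering was arbitrary, for $\mathcal{B}$.

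It remains to establish the Lebesgue-number statement, which is the main obstacle precisely because $C$ need not be compact (indeed $A$ may be far from complete, e.g.\ $A=\Q$), so the classical compactness proof is unavailable. Put $g(x):=\max_{1\le j\le n}(r_j-|x-c_j|)$; this is continuous, and the triangle inequality gives $B(x,g(x))\subseteq B(c_j,r_j)$ whenever the maximum is attained at $j$ and is positive. As $C\subseteq O$, $g$ is positive on $C$, so it suffices to bound $g$ below by a positive $\delta$ on $C$. The key observation is that $g$ can be small only near the finite endpoint set $E:=\{c_j\pm r_j:1\le j\le n\}$: if $x\in O$ and $g(x)<\eta$, then choosing $j$ with $x\in B(c_j,r_j)$ one computes $r_j-\eta<|x-c_j|<r_j$, so one of $c_j\pm r_j$ lies within $\eta$ of $x$, i.e.\ $\mathrm{dist}(x,E)<\eta$.

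Finally I split $E$ into the outer endpoints $E_{\mathrm{out}}:=E\setminus O$ and the interior endpoints $E\cap O$. For each interior endpoint $e$ we have $g(e)>0$; setting $\gamma:=\frac12\min_{e\in E\cap O}g(e)>0$ and using continuity of $g$ to find, for each such $e$, a ball $B(e,\beta_e)$ on which $g>\gamma$, we fix $\beta:=\min_e\beta_e>0$. On the other hand $E_{\mathrm{out}}\subseteq A\setminus O\subseteq A\setminus C$, and $A\setminus C$ is open, so $C$ stays a fixed distance $\delta_0>0$ from the finite set $E_{\mathrm{out}}$. Put $\eta:=\min(\beta,\gamma,\delta_0)$. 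Any $x\in C$ with $g(x)<\eta$ would avoid each $B(e,\beta)$ (where $g>\gamma\ge\eta$), hence by the key observation lie within $\eta\le\delta_0$ of $E_{\mathrm{out}}$, contradicting $\mathrm{dist}(x,E_{\mathrm{out}})\ge\delta_0$. Therefore $g\ge\delta:=\eta>0$ on $C$, the desired Lebesgue number. (The degenerate cases where $E\cap O$ or $E_{\mathrm{out}}$ is empty are easier.) The care needed in separating interior from outer endpoints — so that overlaps among the balls do not spuriously force $g$ to be small at points where $C$ legitimately lives — is the one genuinely delicate point, and it is exactly there that closedness of $A\setminus U$ does the work that compactness would otherwise do.
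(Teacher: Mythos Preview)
Your proof is correct. The overall strategy---reduce to a Lebesgue-number statement at the finitely many interval endpoints---matches the paper's, but your execution is considerably more elaborate than necessary.

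The paper chooses the basis of open intervals and argues directly that \emph{every} closed ball $[a-c,a+c]$ (not only those centred in $C=A\setminus U$) lies in some member of the covering. The point is that each endpoint $a_i,b_i$ is itself covered, so there exist $c_i>0$ with $(a_i-c_i,a_i+c_i)$ and $(b_i-c_i,b_i+c_i)$ each contained in a covering set; taking $c$ below half of all the $c_i$ and all the lengths $b_i-a_i$, a short dichotomy shows $[a-c,a+c]$ sits inside some $(a_i,b_i)$ or inside $U$. This treats all endpoints uniformly, so there is no need to split $E$ into ``inner'' and ``outer'' endpoints, to introduce the auxiliary function $g$, or to restrict attention to $C$.

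What your approach buys is a transparent Lebesgue-number formulation and an explicit continuous gauge $g$, which some readers may find conceptually clearer; what the paper's approach buys is brevity---about four lines instead of two paragraphs. One small technical remark applicable to both arguments: writing $\gamma=\tfrac12\min(\cdots)$ presumes $2$-divisibility of $A$; strictly one should pick, by density of the order, any positive $\gamma$ below the minimum.
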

\begin{proof}
We choose a basis $\mathcal{B}$ of the order topology on
$A$ consisting of all open intervals. We consider any
covering of $A$ of the form $\{U,(a_1,b_1),\dots,(a_n,b_n)\}$. For any
$i$ there is $c_i>0$ such that each of the intervals
$(a_i-c_i,a_i+c_i)$, $(b_i-c_i,b_i+c_i)$ is contained in one of the sets
from the covering. Now, choose $k$ such that for
every sequence $(i_1,\ldots,i_k)\in I^k$ we have
$\diam(f_{i_1}\circ\dots\circ f_{i_k}[A])< c:=1/2\min(c_1,\dots,c_n,b_1-a_1,\dots,b_n-a_n)$.
Then for $a\in A$,, $[a-c,a+c]$ is a subset of some of the sets from the covering:
otherwise, by the choice of $c_i$'s,  $a$ would be at distance $>c$ from all $a_i$'s and
$b_i$'s, and hence a could not belong any of the intervals $(a_i,b_i)$ (as in that case we
would have $[a-c,a+c]\subset (a_i,b_i)$.
But that would mean that $[a-c,a+c]\subset U$.

By the above, any set of diameter smaller than $c$ is contained in one of the
sets from the covering, so we are done.
\end{proof}

\begin{corollary}
$\mathbb{R}$ is a weak $*$-IFS attractor.
\end{corollary}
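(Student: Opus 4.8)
The plan is to apply Proposition~\ref{ordered} with $A=\mathbb{R}$: the additive group $\mathbb{R}$ is a densely ordered abelian group whose order topology is the usual topology and whose associated absolute value is the standard one, so the proposition is available once a suitable family of functions is produced. It therefore suffices to exhibit a countable family of continuous maps $\mathbb{R}\to\mathbb{R}$ whose images have dense union and whose $k$-fold compositions have diameters tending to $0$ uniformly in the sequence of indices. Since $\aleph_0<2^{\aleph_0}=|\mathbb{R}|$, such a family has cardinality smaller than $|\mathbb{R}|$, so it witnesses that $\mathbb{R}$ is a weak $*$-IFS attractor.

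Concretely, for each $n\in\mathbb{Z}$ I would set $f_n(x)=n+\tfrac12+\tfrac1\pi\arctan x$. Each $f_n$ is continuous and maps $\mathbb{R}$ bijectively onto the interval $(n,n+1)$; hence $\bigcup_{n\in\mathbb{Z}}f_n[\mathbb{R}]=\mathbb{R}\setminus\mathbb{Z}$, whose closure is all of $\mathbb{R}$, giving the attractor condition $\mathbb{R}=\cl(\bigcup_{n\in\mathbb{Z}}f_n[\mathbb{R}])$. The family $\{f_n:n\in\mathbb{Z}\}$ is countable, as required.

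The substantive step is the uniform shrinking of diameters. Because $\arctan$ is $1$-Lipschitz, each $f_n$ is $\tfrac1\pi$-Lipschitz, so $\diam(f_n[S])\le\tfrac1\pi\diam(S)$ for every bounded $S\subseteq\mathbb{R}$. For any $(i_1,\dots,i_k)\in\mathbb{Z}^k$ the innermost image $f_{i_k}[\mathbb{R}]=(i_k,i_k+1)$ has diameter $1$, and each of the remaining $k-1$ applications multiplies the diameter by at most $\tfrac1\pi$; thus $\diam(f_{i_1}\circ\dots\circ f_{i_k}[\mathbb{R}])\le(1/\pi)^{k-1}$. Setting $a_k=(1/\pi)^{k-1}$ yields a sequence of positive reals converging to $0$ that bounds all $k$-fold composition diameters, which is exactly the shrinking hypothesis of Proposition~\ref{ordered}. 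The proposition then gives $(SC*)$ for $\{f_n:n\in\mathbb{Z}\}$, and combined with the attractor and cardinality observations above this proves the corollary.

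The one point requiring care is that the maps live on the unbounded space $\mathbb{R}$, where a contraction factor below $1$ does not by itself bound diameters, since $\diam(\mathbb{R})=\infty$. The decisive feature of the construction is that a single application already confines every image to a bounded interval of length $1$, after which the uniform Lipschitz constant $1/\pi<1$ forces the geometric decay that drives the hypothesis of Proposition~\ref{ordered}.
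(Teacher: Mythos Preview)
Your proof is correct and follows essentially the same strategy as the paper: build a $\mathbb{Z}$-indexed family of integer translates of a single contracting map $\mathbb{R}\to\mathbb{R}$ with bounded image, then invoke Proposition~\ref{ordered}. The paper simply asserts the existence of a continuous bijection $f_0:\mathbb{R}\to(-1,1)$ that is $\tfrac12$-Lipschitz and sets $f_n(x)=n+f_0(x)$; you instead give an explicit formula via $\arctan$, which is a perfectly good instantiation (indeed your map is $\tfrac1\pi$-Lipschitz, even better than $\tfrac12$). A cosmetic difference is that the paper's images $(n-1,n+1)$ already cover $\mathbb{R}$ without taking a closure, whereas your images $(n,n+1)$ miss $\mathbb{Z}$ and you correctly invoke the closure in the attractor condition.
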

\begin{proof}
Take a continuous bijection $f_0:\mathbb{R}\to (-1,1)$ which is
Lipschitz with constant $1/2$. Define $f_n(x)=n+f_0(x)$ for any integer
$n$. Then, clearly, the family $\{f_{n}:n\in \mathbb{Z}\}$ satisfies the
assumptions of Proposition \ref{ordered}, so we obtain that it satisfies
$SC*$. Of course, $\mathbb{R}$ is an attractor for that family (and has
a bigger cardinality).
\end{proof}

\parb
A fractal space is compact. If we have a space that is only locally
compact, one can ask whether it is ``locally fractal'', that is, whether
every element is contained in a fractal subspace.

\begin{example}  \rm
We consider the Laurent series field
\[
K\>:=\>\F_p((t))\>=\>\left\{\sum_{j=\ell}^{\infty} \ovl{i_j}\, t^j\mid
\ell\in\Z\,,\, \ovl{i_j} \in\F_p\right\}\>.
\]
The $t$-adic valuation $v_t$ on $\F_p((t))$ is defined by
\[
v_t\,\sum_{j=\ell}^{\infty}\ovl{i_j}\, t^j\>=\> \ell \quad\mbox{ if \ } \ovl{i_\ell}\ne\ovl{0}\>.
\]

For every $k\in\Z$, the function $t^k\F_p((t))\ni c\mapsto t^{\ell-k}c\in
t^\ell\F_p((t))$ is a homeomorphism w.r.t.\ the topology induced by the $t$-adic valuation.
On the other hand, $K=\bigcup_{k\in\Z}
t^k\F_p((t))$. So we see that $\F_p((t))$ is the union over an increasing chain of
mutually homeomorphic fractal spaces.
\hfill $\diamond$
\end{example}

However, we wish to show that $\F_p((t))$ is locally fractal in a stronger sense. The idea is to write $\F_p((t))$
as a union over a collection of mutually homeomorphic fractal subspaces and extend the
functions we have used for $B_0(0)=\F_p[[t]]$ in a suitable way so that they work simultaneously for all of these
subspaces. To this end, we observe that for any two $a,b\in K$, the function $B_0(a)\ni c\mapsto c-a+b\in
B_0(b)$ is a homeomorphism. Note that for each $\ell<0$
there are only finitely many elements in $B_\ell(0)=t^\ell \F_p[[t]]$ that are
non-equivalent modulo $B_0(0)$, so we can write $B_\ell(0)$ as a finite union of the form $\bigcup_j B_0(a_j)$.

\begin{example}  \rm
We extend the functions $f_i$ we used for $R=\F_p[[t]]$ by setting:
\[
f_i\left(\sum_{j=\ell}^{\infty} \ovl{i_j}\, t^j\right) \>:=\> \sum_{j=\ell}^{-1} \ovl{i_j}\, t^j \,+ {\ovl i} +
\sum_{j=1}^{\infty} \ovl{i_{j-1}}\, t^j \>=\> \sum_{j=\ell}^{-1} \ovl{i_j}\, t^j \,+ \ovl{i} +t\sum_{j=0}^{\infty}
\ovl{i_j}\, t^j\>.
\]
For every $a=\sum_{j=\ell}^{\infty} s_j\, t^j\in\F_p((t))$ with $s_j\in\F_p$ we have that
\[
B_0(a) \>=\> a+R \>=\> \sum_{j=\ell}^{-1} s_j\, t^j +R\>.
\]
Hence,
\begin{equation}                         \label{filoc}
f_i(B_0(a))\>=\>f_i\left(\sum_{j=\ell}^{-1} s_j\, t^j + R\right)
\>=\>\sum_{j=\ell}^{-1} s_j\, t^j + \ovl{i} +tR
\end{equation}
and therefore,
\[
\bigcup_{i=0}^{p-1} f_i(B_0(a))\>=\>\sum_{j=\ell}^{-1} s_j\, t^j +
\bigcup_{i=0}^{p-1} (\ovl{i} +tR) \>=\> \sum_{j=\ell}^{-1} s_j\, t^j +R \>=\>B_0(a)\>.
\]
Choose any $m\geq 1$ and $i_0,\ldots,i_{m-1}\in \{0,\ldots,p-1\}$. By induction on $m$ we derive from
(\ref{filoc}):
\begin{eqnarray*}
f_{i_0}\circ\ldots\circ f_{i_{m-1}}(B_0(a)) &=& \sum_{j=\ell}^{-1} s_j\, t^j + \ovl{i_0}\,+\,\ovl{i_1}\,t\,
+\ldots+\,\ovl{i_{m-1}}\, t^{m-1}\,+\, t^m R \\
&=& B_m \left(\sum_{j=\ell}^{-1} s_j\, t^j + \sum_{j=0}^{m-1} \ovl{i_j}\, t^j\right) \>\subset\> B_0(a)\>.
\end{eqnarray*}
\hfill $\diamond$
\end{example}

\pars
For arbitrary discretely valued fields $(K,v)$ with valuation ring $R$ and valuation ideal $M$ we can proceed as
follows. As before, we choose a uniformizing parameter $t\in K$ and a system of representatives $S\subset R$ for the
residue field $Kv$. We set
\[
K^- \>:=\> \left\{\left.\sum_{j=\ell}^{-1} s_j\, t^j \,\right|\> 0\geq\ell\in\Z \mbox{ and } s_\ell,\ldots,s_{-1}\in S
\right\}\>.
\]
Then for every $a\in R$ there is a unique element $a^-\in K^-$ such that $a-a^-\in R$. For every $s\in
S$ we define a function $f_s: K\rightarrow K$ by
\[
f_s(a) \>:=\> a^-\,+\,s\,+\,t(a-a^-)\>.
\]
For every $a\in K$ we obtain that
\begin{equation}                         \label{fsloc}
f_s(B_0(a)) \>=\> a^- + s + tR
\end{equation}
and therefore,
\[
\bigcup_{s\in S} f_s(B_0(a))\>=\> a^- + \bigcup_{s\in S} (s + tR) \>=\>a^- +R \>=\>B_0(a)\>.
\]
Choose any $m\geq 1$ and $s_0,\ldots,s_{m-1}\in S$. By induction on $m$ we derive from
(\ref{fsloc}):
\begin{eqnarray*}
f_{s_0}\circ\ldots\circ f_{s_{m-1}}(B_0(a)) &=& \sum_{j=\ell}^{-1} a_j\, t^j + s_0\,+\,s_1\,t\,
+\ldots+\, s_{m-1}\, t^{m-1}\,+\, t^m R \\
&=& B_m \left(a^- + \sum_{j=0}^{m-1} s_j\, t^j\right) \>\subset\> B_0(a)\>.
\end{eqnarray*}

Take $b,c\in B_0(a)$. Then $b-c\in B_0(0)$, and if $b-c\in B_m(0)$ with $m\geq 0$, then $b^-=c^-$ and
\[
f_s(b)-f_s(c) \>=\> b^- + s + t(b-b^-) - (c^- + s + t(c-c^-)) \>=\> t(b-c)\in B_{m+1}(0)\>.
\]
This shows that each $f_s$ is contracting and hence continuous in the topology induced by the ultrametric.

\parm
We define:
\begin{definition} \rm
A locally compact metric space $(X,d)$ is \bfind{locally fractal} if it is the union over a collection of mutually
homeomorphic subspaces $X_j\,$, $j\in J$, and there is a system $F=[f_1,\ldots,f_n]$ of functions $f_i:X\rightarrow
X$ such that for every $j\in J$, $X_j$ is fractal w.r.t.\ the restrictions of the functions $f_i$ to $X_j\,$.
\end{definition}

\begin{definition} \rm
A locally compact topological space $X$ is \bfind{locally fractal} if it is the union over a collection of mutually
homeomorphic subspaces $X_j\,$, $j\in J$, and there is a system $F=[f_1,\ldots,f_n]$ of functions $f_i:X\rightarrow
X$ such that for every $j\in J$, $X_j$ is (topologically) fractal w.r.t.\ the restrictions of the functions $f_i$ to $X_j\,$.
\end{definition}

Note that we do not require the functions $f_i$ to be continuous or contracting or to satisfy (SC) {\it on all of}
$X$. Indeed, the functions we constructed above have the property that if $(a-b)^-\ne 0$, then
$(f_i(a)-f_i(b))^-=(a-b)^-$.

We have proved:
\begin{proposition}
Every discretely valued field with finite residue field is locally fractal under both definitions.
\end{proposition}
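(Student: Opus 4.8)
The plan is to realize $K$ as the union of its closed unit balls $B_0(a)=a+R$, $a\in K$, and to show that these balls are the mutually homeomorphic fractal subspaces demanded by both definitions, taking as the ambient system the family $F=[f_s\mid s\in S]$, which is finite because the residue field $Kv$ is finite.

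First I would record the decomposition. Since $B_0(a)=B_0(b)$ precisely when $a-b\in R$, the distinct balls $B_0(a)$ are the additive cosets of $R$ in $K$; using the uniqueness of $a^-$ established above, the set $K^-$ is a system of coset representatives, so $K=\bigcup_{c\in K^-}B_0(c)$ is a partition. The definitions presuppose that $X$ is locally compact, which for a discretely valued field with finite residue field holds exactly when $(K,v)$ is complete; in that case $R\cong\varprojlim_n R/M^n$ is profinite and hence compact, so every translate $B_0(a)$ is compact. For mutual homeomorphism I would invoke the observation already made in the text: for any $a,b\in K$ the translation $B_0(a)\ni c\mapsto c-a+b\in B_0(b)$ is a homeomorphism (indeed an isometry of the ultrametric), so all the $B_0(a)$ are mutually homeomorphic.

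Next I would check that each $B_0(a)$ is fractal with respect to the restrictions $f_s|_{B_0(a)}$. Equation~(\ref{fsloc}) gives $f_s(B_0(a))=a^-+s+tR\subset a^-+R=B_0(a)$, so each $f_s$ restricts to a self-map of $B_0(a)$ — this is why the functions need not be well behaved on all of $K$ — and $\bigcup_{s\in S}f_s(B_0(a))=B_0(a)$ yields $F(B_0(a))=B_0(a)$. The identity $f_s(b)-f_s(c)=t(b-c)$ for $b,c\in B_0(a)$, proved immediately before the proposition, shows each $f_s|_{B_0(a)}$ is contracting with a fixed ratio $<1$ in the canonical ultrametric, hence weakly contracting and continuous; this settles the metric definition. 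For the topological definition it remains to verify the shrinking condition (SC) of Definition~\ref{top_ifs}: by the composition formula, $f_{s_0}\circ\cdots\circ f_{s_{m-1}}(B_0(a))$ is the ball of radius $m$ centred at $a^-+\sum_{j<m}s_j t^j$. Given any finite open covering of the compact space $B_0(a)$, I would refine it to a covering by balls of a common radius $k$, exactly as in the first example, and then take this same $k$ in (SC): every length-$k$ composition image is a radius-$k$ ball, hence lies inside one of the covering balls.

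Putting these pieces together shows that $K$ is a union of mutually homeomorphic subspaces $B_0(a)$, each fractal in both senses under the restrictions of the finite system $F=[f_s\mid s\in S]$, which is exactly what both definitions of \emph{locally fractal} require. I expect the only genuinely delicate point to be local compactness: without completeness the balls $B_0(a)$ are not compact and the notion of locally fractal does not apply, so the statement must be read for complete (equivalently, locally compact) fields. Everything else is a direct reading-off of the image formula, the covering/refinement argument, and the contraction estimate that were collected before the proposition.
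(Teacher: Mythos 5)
Your proof follows the paper's own argument essentially verbatim: the same decomposition of $K$ into the unit balls $B_0(a)=a^-+R$ (the cosets of $R$), the same translation homeomorphisms between them, and the same use of the image formula (\ref{fsloc}), the composition-of-balls identity, and the contraction estimate $f_s(b)-f_s(c)=t(b-c)$ to verify that each ball is fractal in both the metric and the topological sense under the restrictions of the finite system $[f_s\mid s\in S]$. Your explicit observation that the balls are compact --- and hence the statement meaningful --- only when $(K,v)$ is complete (equivalently, locally compact) is a clarification the paper leaves implicit rather than a deviation from its route.
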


\end{document}